\newtheorem{theorem}{Theorem}[section]          %
\newtheorem{lemma}{Lemma}[section]              %
\def\OL#1{\overline {#1}}
\def\UL#1{\underline{#1}}
\let\wt\widetilde
\let\bo\partial
\def\7{\mathaccent"7017}
\def\q{\quad}
\def\Q{\qquad}
\newcommand{\bi}[1]{\mbox{\boldmath $ {#1} $}} 
\newcommand{\mbb} {\mathbb}
\newcommand{\mc}[1]{\mbox{$\mathcal {#1}$}}
\newcommand{\mr} {\mathrm}
\newcommand {\mbf} {\mathbf}
\begin{document}

\title{ {\bf  Robust consistent a posteriori  error majorants  for approximate solutions of diffusion-reaction equations} }
 \vskip -5mm
\author{V. G. Korneev
}
 \maketitle
 \vspace*{-1cm}

\begin{center}
\small{   St. Petersburg State  University,
         Russia } \\
  \small{\tt Vad.Korneev2011@yandex.ru
  }
 \end{center}


\begin{abstract}
   Efficiency of the error control of numerical solutions of  partial differential equations entirely depends on the two factors: accuracy of an a posteriori error majorant and the computational cost of its
  evaluation for some test function/vector-function plus the cost of the latter. In the paper, consistency of an a posteriori bound implies that it is the same in the order with the respective unimprovable a priori  bound. Therefore, it is the basic characteristic related to the first factor. The paper is dedicated to the elliptic diffusion-reaction equations. We present a guaranteed robust a posteriori error majorant effective at any nonnegative constant reaction coefficient  (r.c.). For a wide range of finite element solutions on a quasiuniform meshes the majorant is consistent. For  big values of  r.c. the majorant coincides with the majorant of Aubin (1972),  which, as it is known, for  not big r.c. ($<ch^{-2}$) is inconsistent and loses its sense  at r.c. approaching zero.  Our  majorant improves also some  other majorants derived for the Poisson and reaction-diffusion equations.
 \end{abstract}

 \section{\normalsize Introduction}\label{Se:In}
 \setcounter{equation}{0}

\thispagestyle{empty}
 \par For the successful  error control of approximate solutions to the boundary value problems, the guaranteed a posteriori error majorant must be sufficiently accurate and cheap in a sense of the computational work. The first requirement can be considered as satisfied at least in part, if the majorant is consistent in respect to  the order of accuracy with  the  a priori convergence estimate of the numerical method. Obviously, a consistent majorant is unimprovable in the order, if the  a priori convergence estimate is unimprovable in the same sense.  An error majorant usually depends on the approximate solution and on some other function or functions which are termed test functions.  In this paper consistency assumes that it can be approved by an easily calculated test function  with the use of some procedure of the linear complexity.
  \par The term "functional a posteriory error majorants"\, is usually related to a posteriory error bounds possessing significant generality and some other positive properties. However, sometimes generality is attained for the price of the lost of consistency, resulting in the necessity of attracting some majorant minimization procedures over the space of admissible  test functions \cite{CarstensenMerdon:2013, FrolovNeittaanmakiRepin:2003}.  The computational cost of such procedures can exceed
 the cost of the  numerical solution of the boundary value problem.
   \par The majorant of Aubin \cite{Aubin:72}  is one of the  earliest. Let us illustrate it on a model problem
  \begin{equation}\label{poiss}
  \begin{array}{c}
 - {\mr{div }} ({\bf A} {\mr{grad}} \, u) + \sigma u=f(x),\q x\in\Omega \subset {\mbb R}^m\,, \\
   u\big|_{\Gamma_D}=\psi_D\,, \q
   - {\bf A} \nabla \,u\cdot {\bi \nu} \big|_{\Gamma_N}=\psi_N\,,
  \end{array}
  \end{equation}
  where $\Gamma_D,\,\,\Gamma_N$ are not intersecting parts of the boundary
  $ \bo \Omega= \Gamma_D\cup \Gamma_N,\,\,{\mr{mes}}\, \Gamma_D>0$, ${\bi \nu}$ is the internal unite normal to the boundary, ${\bf A} $ is a symmetric
  $m\times m$ matrix, and $0<\sigma={\mr{const}} $.  It is assumed that the matrix ${\bf A} $
    stisfies the inequalities
 $$
\mu_1   {\bi \xi}\cdot {\bi \xi}\le {\bf A} {\bi \xi}\cdot {\bi \xi}\le \mu_2\, {\bi \xi}\cdot {\bi \xi} \,, \q 0<\mu_1,\mu_2={\mr{const}},
 $$
   for any $x\in\Omega$ and  ${\bi \xi}\in {\mbb R}^m$.  Everywhere in the paper, the boundary $\bo\Omega$,
    the coefficients of the matrix ${\bf A} $, and the right hand part $f$ are assumed to be sufficiently smooth, if more specific requirements to their smoothness are absent.

    \par  The error bounds in the energy norm
 \begin{equation}\label{E-norm}
    |\!\!\,\! |\!\!\,\!| u|\!\,\!\!|\!\,\!\!|=(\|u\|_{\bf A}^2+\sigma \|u\|_{L_2(\Omega)}^2)^{1/2}\,,
    \Q \|u\|_{\bf A}^2=\int_\Omega \nabla  u\cdot {\bf A}\nabla u\,,
   \end{equation}
     are most usable in applications.  For  vectors  ${\bf y}\in {\mbb R}^m$  we introduce also the spaces ${\mbf L}_2(\Omega)=(L_2(\Omega))^m$, $ {\bf H}(\Omega, {\mr{div}})=\{ {\bf y}\in {\bf L}_2(\Omega): {\mr{div}}\, {\bf y} \in L_2(\Omega)\}$ and the norm
      $]\!| {\bf y} |\![_{{\bf A}^{-1}}=(\int_\Omega  {\bf A}^{-1} {\bf y}\cdot {\bf y})^{1/2}$.
 \begin{theorem}\label{Th:Aubin-1}
  Let  $f\in L_2(\Omega),\,\psi_D\in H^1(\Omega),\,\,\psi_N\in L_2(\Gamma_N)$,  $v$
  is any function from
   $H^1(\Omega)$  satisfying the boundary condition on $\Gamma_D$. Then for any
  ${\bf z} \in {\bf H}(\Omega, {\mr{div}})$, satisfying on  ${\Gamma_N}$
   the boundary condition  ${\bf z}\cdot {\bi \nu}
  =\psi_N$, we have
 \begin{equation}\label{Aubin-1}
               |\!\!\,\!|\!\!\,\!| v-u|\!\,\!\!|\!\,\!\!|^2\le ]\!| {\bf A}\nabla\,v +{\bf z} |\![_{{\bf A}^{-1}}^2+
               \frac{1}{\sigma}\|f-\sigma v - {\mr{div}} \,{\bf z}\|_{L_2(\Omega)}^2\,.
 \end{equation}
  \end{theorem}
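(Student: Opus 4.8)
\medskip
\noindent\textbf{Proof proposal.} I would use the classical ``identity plus Cauchy--Schwarz'' scheme, carried out directly so that the reaction term is treated together with the diffusion term (this is essentially a Prager--Synge / hypercircle argument written out for the present bilinear form). Set $e:=v-u$. Since $v$ and $u$ both satisfy the Dirichlet condition $\,\cdot\,|_{\Gamma_D}=\psi_D$, the trace of $e$ on $\Gamma_D$ vanishes, so $e$ lies in $H^1_D(\Omega):=\{w\in H^1(\Omega):w|_{\Gamma_D}=0\}$. Writing the weak form of \eqref{poiss} — multiply by $w\in H^1_D(\Omega)$, integrate by parts, drop the $\Gamma_D$ surface term, and use the internal-normal Neumann condition $-{\bf A}\nabla u\cdot{\bi\nu}=\psi_N$ — one obtains, for all $w\in H^1_D(\Omega)$,
\[
\int_\Omega {\bf A}\nabla u\cdot\nabla w+\sigma\int_\Omega u\,w=\int_\Omega f\,w+\int_{\Gamma_N}\psi_N\,w .
\]

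Next I would expand the squared energy norm of $e$ by bilinearity and substitute this relation with $w=e$, which gives
\[
|\!\!\,\!|\!\!\,\!| e|\!\,\!\!|\!\,\!\!|^2=\int_\Omega {\bf A}\nabla v\cdot\nabla e+\sigma\int_\Omega v\,e-\int_\Omega f\,e-\int_{\Gamma_N}\psi_N\,e .
\]
The remaining boundary integral is removed with the help of ${\bf z}$: applying the Green formula for ${\bf z}\in{\bf H}(\Omega,{\mr{div}})$ paired with $e\in H^1_D(\Omega)$, using ${\bf z}\cdot{\bi\nu}=\psi_N$ on $\Gamma_N$ and $e=0$ on $\Gamma_D$, yields $\int_{\Gamma_N}\psi_N\,e=-\int_\Omega{\mr{div}}\,{\bf z}\,e-\int_\Omega{\bf z}\cdot\nabla e$. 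Substituting, the exactly-known data $f,\psi_N$ combine with $v$ and ${\bf z}$ into the clean identity
\[
|\!\!\,\!|\!\!\,\!| e|\!\,\!\!|\!\,\!\!|^2=\int_\Omega({\bf A}\nabla v+{\bf z})\cdot\nabla e-\int_\Omega(f-\sigma v-{\mr{div}}\,{\bf z})\,e .
\]

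Finally I would estimate the right-hand side. The first term is bounded, via the generalized Cauchy--Schwarz inequality with weight ${\bf A}$, by $]\!|{\bf A}\nabla v+{\bf z}|\![_{{\bf A}^{-1}}\,\|e\|_{\bf A}$; the second, after inserting $\sigma^{-1/2}\sigma^{1/2}$, by $\sigma^{-1/2}\|f-\sigma v-{\mr{div}}\,{\bf z}\|_{L_2(\Omega)}\cdot\sigma^{1/2}\|e\|_{L_2(\Omega)}$. Denoting the two data-dependent factors by $a,b$ and putting $c=\|e\|_{\bf A}$, $d=\sigma^{1/2}\|e\|_{L_2(\Omega)}$ so that $|\!\!\,\!|\!\!\,\!| e|\!\,\!\!|\!\,\!\!|^2=c^2+d^2$, the estimate reads $c^2+d^2\le ac+bd\le (a^2+b^2)^{1/2}(c^2+d^2)^{1/2}$ by Cauchy--Schwarz in $\mathbb{R}^2$; cancelling one factor $|\!\!\,\!|\!\!\,\!| e|\!\,\!\!|\!\,\!\!|$ gives \eqref{Aubin-1}.

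The computation is routine; the only delicate points are the bookkeeping of signs from the internal unit normal ${\bi\nu}=-{\bf n}$ in the two integrations by parts, the verification that the $\Gamma_D$ surface terms vanish (which is precisely why $v$ and $u$ must share the Dirichlet datum), and the justification of the $H({\mr{div}})$--$H^1$ Green formula, which is standard under the smoothness assumptions of the paper. I would regard the sign/normal bookkeeping as the main ``obstacle'', being the only place where an error could easily slip in.
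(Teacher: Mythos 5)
Your proof is correct and is essentially the argument the paper relies on: the paper itself does not prove Theorem~\ref{Th:Aubin-1} (it cites Aubin and Repin--Sauter), but its own proof of Theorem~\ref{Th: K3} uses exactly your scheme --- the error identity obtained by integration by parts followed by the weighted Cauchy--Schwarz inequality in $\mathbb{R}^2$ (there with $\sigma_*$ in place of your $\sigma$) and cancellation of one factor of the energy norm. Your handling of the internal normal ${\bi\nu}=-{\bf n}$ and of the $\Gamma_D$ and $\Gamma_N$ boundary terms is consistent with the sign conventions in (\ref{poiss}) and (\ref{Aubin-1}), so no gap remains.
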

  \par  The bound  (\ref{Aubin-1}) is   a particular case  of the results of \cite{Aubin:72}, see, {\em e. g.}, Theorem  22 in Introduction and additionally
   Theorems 1.2, 1.4, 1.6 of ch. 10.
     It can be found also in  \cite{Repin:2000, RepinSauter:2006}.  Obviously, this majorant becomes meaningless at  $ \sigma\rightarrow 0$.  Let us add that in (\ref{Aubin-1}) one can use ${\bf z}=- {\bf A} \nabla \,w$ with any $w\in H^1(\Omega, {\mc L})$, satisfying the boundary condition in (\ref{poiss}) on ${\Gamma_N}$.  Here ${\mc L}=- {\mr{div }} ({\bf A} {\mr{grad}} )$ and $ H^1(\Omega, {\mc L})=\{w:w\in H^1(\Omega),\,\,{\mc L}w\in L_2(\Omega)\}  $. If
     $v\in C(\Omega)\cap H^1(\Omega)$ is the finite element  solution, then $w=\wt{w}(v)$ most often is obtained from $v$ by some recovery technique [13, 1, 4, 5].

   \par Let $ \7H^1(\Omega):=\{v\in H^1(\Omega):v|_{\bo \Omega}=0 \}$, for simplicity $\Gamma_D=\bo \Omega$,
      $\psi_D\equiv 0$, ${\bf A}={\bf I}$, where  ${\bf I}$ is the unity matrix. In \cite{RepinFrolov:02} for the case $ \sigma= 0$, it was suggested the majorant
   \begin{equation}\label{rep-frol}
   \|\nabla (v-  u) \|_{{\bf L}_2(\Omega)}^2 \le (1+\epsilon)\|\nabla v+ {\bf
    z} \|_{{\bf L}_2(\Omega)}^2  + c_\Omega(1+\frac{1}{\epsilon})\|\nabla\cdot {\bf z}- {
    f}\|_{ L_2(\Omega)}^2\,,\q \forall \,\,\epsilon>0\,,
\end{equation}
    where $v$ and ${\bf z}$ are any function and vector-function from  $\7H^1(\Omega)$ and ${\bf H}(\Omega, {\mr{div}})$, respectively,
     and $c_\Omega$ is the constant from the Friedrichs inequality
     \par  Attempts to modify the Aubin's majorant in such a way  that it provided admissible accuracy for all $ \sigma\ge 0$
     were made in the papers \cite{RepinSauter:2006, Churilova:14}.  The latter suggests the majorant for all  $\sigma={\mr{const}}\ge 0$ of the form
    \begin{equation}\label{Chur-1}
               |\!\!\,\!|\!\!\,\!| v-u|\!\,\!\!|\!\,\!\!|^2\le (1+\epsilon)]\!| {\bf A}\nabla\,v +{\bf z} |\![_{{\bf A}^{-1}}^2+
               \frac{1}{\sigma+\frac{\epsilon}{c_\Omega(1+\epsilon)}}\|f-\sigma v - {\mr{div}} \,{\bf z}\|_{L_2(\Omega)}^2\,.
    \end{equation}
   \par It was shown in \cite{AnufrievKorneevKostylev:2006, Korneev:2011} that the correction of arbitrary vector-function ${\bf z} \in {\bf H}(\Omega, {\mr{div}})$ into the vector-function $\bi \tau$, satisfying the balance/equilibrium equations, can be done by quite a few
   rather simple techniques. In particular, it is true for the correction of the flux vector-function $\nabla u_{\mr{fem}}$ into $\bi \tau(u_{\mr{fem}})$.  This allows to implement the a posteriori bound  $|\!\!\,\!|\!\!\,\!| v-u|\!\,\!\!|\!\,\!\!|\le \| {\bf A}\nabla\,u_{\mr{fem}} +\bi \tau(u_{\mr{fem}}) \|_{{\bf A}^{-1}}^2$ or the bound with the additional free vector-function
     in the right part, which we present below. For simplicity, we restrict considerations to the same homogeneous Dirichlet problem for the Poisson equation in a two-dimensional convex domain.  Let $T_k$ be the projection of the domain $\Omega$ on the axis $x_{3-k}$ and the equations of the left and lower parts of the boundary be $x_k=a_k(x_{3-k}),\,\, x_{3-k}\in T_k$.  If $\beta_k$
    are arbitrary bounded functions  and $\beta_1+\beta_2\equiv 1$,  then according to \cite{AnufrievKorneevKostylev:2006, Korneev:2011}
   \begin{equation}\label{yappi-1}
   \begin{array}{l}
    \|\nabla (v-  u) \|_{ {\bf L}_2(\Omega)} \le
      \|\nabla v + {\bf
    z} \|_{{\bf L}_2(\Omega)} +\\
    \vspace{-3mm}
    \\
      \sum_{k=1,2}\|\int_{a_k(x_{3-k})}^{x_k}
    \beta_k(f-\nabla \cdot{\bf z})
    (\eta_k,x_{3-k})\,d\eta_k \|_{ L_2(\Omega)}\,.
    \end{array}
   \end{equation}
\par In (\ref{yappi-1}) on the right  we have integrals from the residual and this helps to make the majorant more accurate.
  Besides there is an additional free function  $\beta_1 $  or $\beta_2 $  and it's  right choice (for instance, with the use of the found approximate solution $v$) can accelerate the process of the minimization of the right part.  Nevertheless, the majorant
(\ref{yappi-1}), as well as majorants (\ref{rep-frol}),  (\ref{Chur-1}), are not consistent.  Since it is practically obvious, below we discuss this matter   very briefly.

   \par  The inconsistency is the most clearly visible  for finite element methods of a higher smoothness. Let us turn to
    (\ref{rep-frol}) in the case when $v=u_{\mr{fem}} \in C^1(\omega) \cap H^2(\Omega)$ and $f\in H^1(\Omega)$. Therefore,
    $u\in H^3(\Omega)$, and the unimprovable a priori convergence estimates $\|u-v\|_{H^k(\Omega)}\le c h^{3-k}\|u\|_{H^3(\Omega)}$, $k=0,1,2$, hold with the mesh parameter $h$ under assumption that the finite element assemblage satisfies the conditions of the generalized quasiuniformity  \cite{Ciarlet:1978, Korneev:77}. For the latter conditions see, {\em{e. g.}},  Section 3.2 in
      \cite{KorneevLanger:2015}.     Now we see that the left part of (\ref{rep-frol}) is estimated from above with the order $h^4$.  One can set ${\bf z}=-\nabla v$  making  the first term in the right part equal to zero. At the same time,    the second term in the right part is estimated from above only with the order  $h^2$. More over since the estimates of the convergence are exact there are functions $f\in H^1(\Omega)$ for which the second term is estimated with the order  $h^2$ from below. The proofs of the inconsistency of the majorants (\ref{Chur-1}),
    and  (\ref{yappi-1}) are also straightforward.

     \par  If the FEM belongs to the class $C$, then we can use the so called recovered flux  ${\bf z}=\wt{\bf z}(u_{\mr{fem}})$, whose components  $\wt{z}_k(u_{\mr{fem}})$ in the simplest case are defined as functions of the same finite element space, to which belongs $u_{\mr{fem}}$.  Several cheap averaging procedures were developed for defining the  nodal parameters of
     $\wt{z}_k$, which provide at least the same orders of accuracy  for $(\bo u/\bo x_k -\wt{z}_k)$  and
     $\bo (u - u_{\mr{fem}}) /\bo x_k $, see  \cite{LiZhang:1999, AinsworthOden:00, BabuskaStrouboulis:01, BabuskaWhitemanStrouboulis:11}.  If $f\in L_2(\Omega)$, then   the order of the left part  is by the multiplier $h^2$  higher again than the order of the right part.

    \par Let $ \sigma_*$ be the value from the inequality
  \begin{equation}\label{Aub-1}
     \| u - v \|_{L_2(\Omega)} ^2 \le  \sigma_*^{-1}  \| u - v  \|_{{\bf A}} ^2\,.
   \end{equation}
 There is the multiplier $1/\sigma$ before the  second norm  in the right part of  (\ref{Aubin-1}). In view of this, it can be shown that at $\sigma \ge \sigma_*$ the Aubin's majorant is consistent for approximate  solutions by FEM, if $ \sigma_*^{-1}\le c_\dag h^2,\,\,c_\dag={\mr{const}}$, and some natural conditions are fulfilled, see Lemma~1 in the next section.  However, at
$\sigma\ll \sigma^*$ the consistency deteriorates and with $\sigma$ tending to zero the majorant  becomes meaningless.

 \vspace{0.2cm}
 \section{\normalsize Consistent  error majorant for any nonnegative
    reaction coefficient
 }
 \setcounter{equation}{0}
 \label{Se:cons}  
 \vspace{0.2cm}

\par We start from the presentation of a guaranteed robust error majorant valid   for all  $\sigma\in [0,\infty)$, which at application to the  FEM solutions is consistent.
 \vspace*{-1.4mm}
 \begin{theorem}\label{Th: K3}
  Let $\Gamma_D=\bo \Omega$, the conditions of Theorem~\ref{Th:Aubin-1} be fulfilled,
   and $\sigma_*$  satisfy the inequality
    (\ref{Aub-1}).
   Then
 \begin{equation}\label{K-23}
               |\!\!\,\!|\!\!\,\!| v-u|\!\,\!\!|\!\,\!\!|^2\le {\mc M}(\sigma,f,v,{\bf z})= \Theta
                             \Big[ ]\!| {\bf A}\nabla\,v +{\bf z} |\![_{{\bf A}^{-1}}^2+
            \theta    \|f-\sigma v - {\mr{div}} \,{\bf z}\|_{L_2(\Omega)}^2 \Big]\,,
 \end{equation}
  where for $\kappa=\sigma/\sigma_*$
 \begin{equation}\label{K-23-1}
 \begin{array}{ll}
  \Theta=\left\{  \begin{array}{ll}
   2/(1+\kappa), \q &\forall\, \sigma\in [0,\sigma_*] \\
    \vspace{-5mm}
    \\
    1,    &\forall\, \sigma> \sigma_*
    \end{array} \right\}, &
     \theta=\left\{  \begin{array}{ll}
 1/\sigma_*,  \q   &\forall\, \sigma\in [0,\sigma_*] \\
    \vspace{-5mm}
    \\
 1/\sigma,  \q   &\forall\, \sigma> \sigma_*
 \end{array} \right\}\,.
 \end{array}
 \end{equation}
 %
  \end{theorem}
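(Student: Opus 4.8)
The plan is to push the standard functional a~posteriori argument up to one ``master'' Cauchy--Schwarz inequality, and then to optimise the surviving constants separately on the two ranges $\sigma\le\sigma_*$ and $\sigma>\sigma_*$. Put $e:=v-u$. Since $\Gamma_D=\bo\Omega$ and $v,u$ carry the same Dirichlet data, $e\in\7H^1(\Omega)$, so testing the weak form of (\ref{poiss}) with $w=e$ gives $|\!\!\,\!|\!\!\,\!| e|\!\,\!\!|\!\,\!\!|^2=\int_\Omega{\bf A}\nabla v\cdot\nabla e+\sigma\int_\Omega v\,e-\int_\Omega f\,e$. For any ${\bf z}\in{\bf H}(\Omega,{\mr{div}})$ one has $\int_\Omega{\bf z}\cdot\nabla e+\int_\Omega({\mr{div}}\,{\bf z})\,e=0$ (integration by parts, the boundary term dropping because $e|_{\bo\Omega}=0$), and adding this to the previous identity yields
\begin{equation*}
|\!\!\,\!|\!\!\,\!| e|\!\,\!\!|\!\,\!\!|^2=\int_\Omega({\bf A}\nabla v+{\bf z})\cdot\nabla e+\int_\Omega(\sigma v-f-{\mr{div}}\,{\bf z})\,e .
\end{equation*}
Estimating the first integral by Cauchy--Schwarz in the ${\bf A}^{-1}$-weighted inner product and the second in $L_2(\Omega)$ gives the master bound
\begin{equation*}
|\!\!\,\!|\!\!\,\!| e|\!\,\!\!|\!\,\!\!|^2\le\ ]\!|{\bf A}\nabla v+{\bf z}|\![_{{\bf A}^{-1}}\,\|e\|_{\bf A}+\|f-\sigma v-{\mr{div}}\,{\bf z}\|_{L_2(\Omega)}\,\|e\|_{L_2(\Omega)} .
\end{equation*}

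On the range $\sigma>\sigma_*$ the claimed bound (with $\Theta=1$, $\theta=1/\sigma$) is exactly the Aubin estimate (\ref{Aubin-1}) of Theorem~\ref{Th:Aubin-1}: in the master bound I would use $\|e\|_{\bf A}\le|\!\!\,\!|\!\!\,\!| e|\!\,\!\!|\!\,\!\!|$ and $\|e\|_{L_2(\Omega)}\le\sigma^{-1/2}|\!\!\,\!|\!\!\,\!| e|\!\,\!\!|\!\,\!\!|$, apply the Cauchy--Schwarz inequality in ${\mbb R}^2$, and cancel one factor $|\!\!\,\!|\!\!\,\!| e|\!\,\!\!|\!\,\!\!|$. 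Nothing beyond Theorem~\ref{Th:Aubin-1} is needed here.

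The interval $0\le\sigma\le\sigma_*$ is the substantive case: now $\sigma^{-1}$ is unavailable and must be replaced by $\sigma_*^{-1}$ through the hypothesis (\ref{Aub-1}), i.e. $\|e\|_{L_2(\Omega)}^2\le\sigma_*^{-1}\|e\|_{\bf A}^2$. I would regroup the right-hand side of the master bound as
\begin{equation*}
]\!|{\bf A}\nabla v+{\bf z}|\![_{{\bf A}^{-1}}\cdot\|e\|_{\bf A}+\big(\sigma_*^{-1/2}\|f-\sigma v-{\mr{div}}\,{\bf z}\|_{L_2(\Omega)}\big)\cdot\big(\sigma_*^{1/2}\|e\|_{L_2(\Omega)}\big)
\end{equation*}
and apply Cauchy--Schwarz in ${\mbb R}^2$, obtaining the bound $\big(]\!|{\bf A}\nabla v+{\bf z}|\![_{{\bf A}^{-1}}^2+\frac{1}{\sigma_*}\|f-\sigma v-{\mr{div}}\,{\bf z}\|_{L_2(\Omega)}^2\big)^{1/2}\big(\|e\|_{\bf A}^2+\sigma_*\|e\|_{L_2(\Omega)}^2\big)^{1/2}$. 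Finally, writing $a=\|e\|_{\bf A}^2$ and $b=\|e\|_{L_2(\Omega)}^2$, the ratio $(a+\sigma_*b)/(a+\sigma b)$ is nondecreasing in $b$ for $\sigma\le\sigma_*$, so over the admissible range $0\le b\le a/\sigma_*$ forced by (\ref{Aub-1}) it attains its maximum $2/(1+\kappa)$ at $b=a/\sigma_*$; hence $\|e\|_{\bf A}^2+\sigma_*\|e\|_{L_2(\Omega)}^2\le\frac{2}{1+\kappa}\,|\!\!\,\!|\!\!\,\!| e|\!\,\!\!|\!\,\!\!|^2$. Substituting, cancelling one factor $|\!\!\,\!|\!\!\,\!| e|\!\,\!\!|\!\,\!\!|$ and squaring gives (\ref{K-23})--(\ref{K-23-1}) for $\sigma\in[0,\sigma_*]$.

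I expect no step to be technically hard; the only delicate point is the choice in the last paragraph of exactly the pairing $\big(\,\cdot\,,\ \sigma_*^{1/2}\|e\|_{L_2(\Omega)}\big)$, rather than the crude split $(s+t)^2\le 2s^2+2t^2$, so that the constant surviving the cancellation is the sharp $2/(1+\kappa)$ and the two formulas for $\Theta,\theta$ join continuously at $\sigma=\sigma_*$ (both give $\Theta=1$, $\theta=1/\sigma_*$). I would also take care to invoke (\ref{Aub-1}) only for $e=v-u$ itself, which is all that is assumed; the homogeneity of $\psi_D$ is irrelevant since only the difference $e$ enters, and had a nonempty $\Gamma_N$ been admitted the boundary term in the integration by parts would be removed by the constraint ${\bf z}\cdot{\bi \nu}=\psi_N$, exactly as in Theorem~\ref{Th:Aubin-1}.
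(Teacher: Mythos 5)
Correct, and essentially the paper's own argument: you derive the same error identity, apply the weighted Cauchy--Schwarz pairing that produces the factor $\big(\|e\|_{\bf A}^2+\sigma_*\|e\|_{L_2(\Omega)}^2\big)^{1/2}$ (this is the paper's (\ref{K-25})), bound that factor by $\tfrac{2}{1+\kappa}$ times the squared energy norm using (\ref{Aub-1}) (your monotone-ratio argument in $b$ is equivalent to the paper's $\beta$-optimization in (\ref{K-26})), then cancel one energy-norm factor and square, with the range $\sigma>\sigma_*$ reduced to Aubin's estimate exactly as in the paper. The only blemish is a sign slip in your intermediate identity: adding the integration-by-parts relation gives $\int_\Omega(\sigma v-f+{\mr{div}}\,{\bf z})\,e=-\int_\Omega(f-\sigma v-{\mr{div}}\,{\bf z})\,e$, which is what your subsequent master bound in fact uses.
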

  \begin {proof} Obviously, for $\sigma \ge \sigma_*$ the  majorant (\ref{K-23}), (\ref{K-23-1})   coincides  with the majorant of Aubin. Consequently, it is necessary to consider only the case $\sigma < \sigma_*$.  For simplicity, in the proof we set  ${\bf A}={\bf I}$ and $\psi_D \equiv 0$. For the solution of the problem $u$,  arbitrary function $v\in \7{H}^1(\Omega)$ and
   vector-function ${\bf z} \in {\bf H}(\Omega, {\mr{div}})$,  we can write
    \begin{equation}\label{bound-1}
\begin{array}{c}
    |\!\!\,\!|\!\!\,\!| v-u|\!\,\!\!|\!\,\!\!|^2= \int_\Omega \big[{\bi \nabla} (v-u)\cdot \nabla (v-u) +
    \sigma (v-u)(v-u) \big]= \\
    \\ \int_\Omega \big[(\nabla v+{\bf z})\cdot \nabla (v-u) -  ({\bf z}+\nabla u)\cdot
    \nabla (v-u)+  \\
    \\  \sigma (v-u)(v-u) \big]\,.
  \end{array}
\end{equation}
  Integrating by parts the second summand in the right part  and implementing the inequality
   \begin{equation}\label{simple}
   a_1b_1+a_2b_2\le (a_1^2+\frac{1}{\sigma_*}a_2^2)^{1/2}(b_1^2+{\sigma_*}b_2^2)^{1/2}\,,
  \end{equation}
   we find out that
 \begin{equation}\label{K-25}
 \begin{array}{c}
    |\!\!\,\!|\!\!\,\!| v-u|\!\,\!\!|\!\,\!\!|^2  =\| \nabla (u-v)\|_{{\bf L}^2(\Omega)}^2+\sigma\|u-v\|_{ L^2(\Omega)}^2\le \\
    \vspace*{-3mm}
   \\\Big[ \| \nabla (v-w)\|_{{\bf L}^2(\Omega)}^2     +\frac{1}{\sigma_*}\|f-\sigma v + \Delta w\|_{L^2(\Omega)}^2\Big]^{1/2}\times\\
    \vspace*{-3mm}
   \\
    \Big[ \| \nabla (u-v)\|_{{\bf L}^2(\Omega)}^2+\sigma_*\|u-v\|_{ L^2(\Omega)}^2 \Big]^{1/2}\,.
  \end{array}
   \end{equation}
   The use of  $\beta \in (0,1]$ and  (\ref{Aub-1}) allows us to get
  \begin{equation}\label{K-26}
  \begin{array}{c}
   \| \nabla (u-v)\|_{{\bf L}^2(\Omega)}^2+\sigma_*\|u-v\|_{ L^2(\Omega)}^2=|\!\!\,\!|\!\!\,\!| u-v|\!\,\!\!|\!\,\!\!|^2+
   (\sigma_*-\sigma)\|u-v\|_{ L^2(\Omega)}^2\le \\
    \vspace*{-3mm}
   \\
 |\!\!\,\!|\!\!\,\!| u-v|\!\,\!\!|\!\,\!\!|^2+
   (\sigma_*-\sigma)\big[ \frac{\beta}{ \sigma_*}\| \nabla (u-v)\|_{{\bf L}^2(\Omega)}^2 + (1-\beta)\|u-v\|_{ L^2(\Omega)}^2 \big] =\\
    \vspace*{-3mm}
   \\
 \big[ 1+(\sigma_*-\sigma)\frac{\beta}{\sigma_*}  \big]\| \nabla (u-v)\|_{{\bf L}^2(\Omega)}^2+\big[ (1-\beta)(\sigma_*-\sigma)+\sigma \big]\|u-v\|_{ L^2(\Omega)}^2\,,
   \end{array}
 \end{equation}
   The value
    $
    \beta= {2}/(1+\kappa)
    $
    makes the relation of the multipliers before the second and first norms on the right of (\ref{K-26}) equal to $\sigma$.
     Substituting it into (\ref{K-26}) and then (\ref{K-26}) into (\ref{K-25}) yields
   \begin{equation}\label{K-27}
       |\!\!\,\!|\!\!\,\!| v-u|\!\,\!\!|\!\,\!\!|^2 \le \frac {2}{1+\kappa} \Big[ \| \nabla (v-w)\|_{{\bf L}^2(\Omega)}^2     +\frac{1}{\sigma_*}\|f-\sigma v + \Delta w\|_{L^2(\Omega)}^2\Big]^{1/2} |\!\!\,\!|\!\!\,\!| v-u|\!\,\!\!|\!\,\!\!| \, ,
         \end{equation}
   which is equivalent to (\ref{K-23}) in the case of ${\bf A}={\bf I}$.  
  \end{proof}

  \par As was noted above, for $\sigma \ge \sigma_*$ the  majorant (\ref{K-23}), (\ref{K-23-1})   coincides  with the majorant of Aubin. In the contrast to Aubin's majorant, for all $\sigma \ge 0$ it is well defined and, more over,
      belongs to the class of  consistent majorants when applied to the  solutions by the  finite element method  satisfying quite natural conditions. Before formulating  the respective result in Lemma~1 below, we  briefly discuss these conditions.

    \par It is assumed that the finite element space  ${\mbb V}_h(\Omega)$,
     ${\mbb V}_h(\Omega) \!\!\subset\!\! C(\Omega)\!\cap\!  H^1(\Omega)$, is induced by the assemblage of the finite elements, in general curvilinear, which satisfy the generalized conditions of quasiuniformity  with the mesh parameter $h$, see {\em{e. g.}}
      \cite{Korneev:77,  KorneevLanger:2015}, and  $\7{\mbb
     V}_h(\Omega)=\{v\in {\mbb V}_h(\Omega):\,\,v|_{\bo \Omega}=0 \}$.  For simplicity, we  consider  the FEM of
       the first order of  accuracy, {\em i. e.}  with finite elements associated with the triangular linear and square bilinear reference elemens.
        If $f\in L_2(\Omega)$, boundary $\bo \Omega$ and the coefficients of the matrix ${\bf A}$ are sufficiently smooth,
         then the following convergence estimates  can be proved:
        \begin{equation}\label{fem-conv}
        \begin{array}{l}
        \| u - u_{\mr{fem}} \|_{k,\Omega}\le c_{k,l} h^{l-k}\|u
        \|_{l,\Omega}\,, \q
                 k=0,1\,, \q l=1,2\,,  \\
       \vspace*{-4mm}
                 \\
          \| u - u_{\mr{fem}} \|_{0,\Omega} \le {\mr{min}}[ c_\circ c_{0,2} h^2,\sigma^{-1}] \|f
            \|_{0,\Omega}\,,\q \forall\,\sigma\ge 0\,,
       \end{array}
          \end{equation}
          where $\|\cdot\|_{k,\Omega}$ are the norms in the space  $L_2(\Omega) $ for $k=0 $ and in the spaces $H^k(\Omega) $  for $k>0$,  whereas
           $c_\circ, c_{k,l}={\mr{const}}$.  If $\sigma=0$,  they are the well known FEM convergence estimates  for regular elliptic problems, see, {\em e. g.},
            \cite{OganesianRuhovets:1979, Ciarlet:1978, Korneev:77}.  If $\sigma\le c_\dag^{-1}h^{-2}$, the results of  \cite{BrambleXu:91}  on elliptic projections  in the space $L_2$  together with the fact that at least $u\in H^2(\Omega)$ can be used for their proof.  Indeed, it is easily shown
           that
   \begin{equation}\label{2-f}
   \| u \|_{H^2(\Omega)} \le c_\circ \|f \|_{L_2(\Omega)},\,\,c_\circ={\mr{const}}\,,
   \end{equation}
   at any $\sigma \ge 0$, if it is true (with different constant) for  $\sigma=0$. The second bound  (\ref{fem-conv}) takes additionally into account the bound $\| u - u_{\mr{fem}} \|_{0,\Omega} \le \sigma^{-1} \|f
            \|_{0,\Omega}$.

    \par For the use of a posteriori majorant (\ref{K-23}), (\ref{K-23-1}), one has to bound $c_\dag$. First we turn to the case  $\sigma=0$.  By means of Nitsche trick, see
   {\em{e.  g.}}  \cite{Ciarlet:1978, OganesianRuhovets:1979},  for $e_{\mr{fem}}=u - u_{\mr{fem}}$ it is proved the inequality
   \begin{equation}\label{Nitsche}
      \|\,  e_{\mr{fem}}\,  \|_{0,\Omega}  ^2 \le
       \|\, e_{\mr{fem}}\, \|_{\bf A} \, \| \, \phi-\phi_{\mr{int}}\,\|_{\bf A}      \, ,
         \end{equation}
     where $\phi$ is the solution of the boundary value problem  ${\mc L}\phi=e_{\mr{fem}},\,\, \phi |_{\bo \Omega} =0, $
     and, according to (\ref{2-f}),   $\phi \in H^2(\Omega)$, whereas      $\phi_{\mr{int}}$ is the interpolation of $\phi$ from the finite element  space $\7{\mbb  V}_h(\Omega)$.  Combining the approximation error bounds
        \begin{equation}\label{fem-approx}
        \| \phi - \phi_{\mr{int}} \|_{k,\Omega}\le \hat{c}_{k,l} h^{l-k}\|\phi
        \|_{l,\Omega}\,, \q
                 k=0,1\,, \q l=1,2\,,
          \end{equation}
            (\ref{Nitsche}) and (\ref{2-f}) yields (\ref{Aub-1}) with $ \sigma_*^{-1}\le c_\dag h^2$  and
            \begin{equation}\label{Aub-kkor}
            c_\dag=\mu_2\hat{c}_{1,2}^2c_\circ^2\,.
            \end{equation}
            %
            %
    \par Now we will use the notations $e_\sigma=e_{\mr{fem}}$  and $e_0$ for the errors of the finite element solutions of the 
  equations  ${\mc L}u + \sigma u=f$  and ${\mc L}u = f_1$, respectively,  with the first  boundary condition $u|_{\bo \Omega} =0$ and $f_1=f-\sigma u$.  Since from the proof given above and the introduced definitions it follows  that
  \begin{equation}\label{Aub-k}
     \|  e_0\|_{0,\Omega} ^2 \le c_\dag h^2 \| e_0  \|_{{\bf A}} ^2\,, \Q  \|  e_\sigma\|_{0,\Omega}\le\|  e_0\|_{0,\Omega}\,, \q
     \| e_0  \|_{{\bf A}} \le \| e_\sigma  \|_{{\bf A}} \,,
   \end{equation}
  we come to (\ref{Aub-1}) of the form
  \begin{equation}\label{Aub-kk}
     \|  e_{\mr{fem}}\|_{0,\Omega} ^2 \le c_\dag h^2 \| e_{\mr{fem}}  \|_{{\bf A}} ^2
  \end{equation}
  with the same, as in (\ref{Aub-kkor})  and (\ref{Aub-k}), constant  $c_\dag$.  Accordingly, at  $\sigma \in [0, 1/(c_\dag h^2)]$
  the bound (\ref{K-23})  for the finite element solutions   can be rewritten as
 \begin{equation}\label{K-23k}
 \begin{array}{l}
               |\!\!\,\!|\!\!\,\!| u_{\mr{fem}}-u|\!\,\!\!|\!\,\!\!|^2\le {\mc M}_{\mr{fem}}(\sigma,f,u_{\mr{fem}},{\bf z})=\\
               \vspace*{-1mm}
               \\
                \frac{2}{1+c_\dag h^2\sigma}
                             \Big[ ]\!| {\bf A}\nabla\,v +{\bf z} |\![_{{\bf A}^{-1}}^2+
            c_\dag h^2   \|f-\sigma v - {\mr{div}} \,{\bf z}\|_{L_2(\Omega)}^2 \Big]\,,
 \end{array}
 \end{equation}

       \par   The construction of the recovered vector-function ${\bf z}=\wt{\bf z}(u_{\mr{fem}}) \in {\bf H}(\Omega,  {\mr{div}})$ can be performed with the use of  the  finite element fluxes
          $-{\bf A} \nabla u_{\mr {fem}}$. The convergence bounds (\ref{fem-conv}) lead  to the conclusion that at any $\sigma\ge 0$  the same recovery techniques can be used,  which are used
            for regular elliptic problems \cite{AinsworthOden:00, BabuskaStrouboulis:01, BabuskaWhitemanStrouboulis:11, LiZhang:1999}. They allow  to obtain such vector-functions ${\bf z}$
         with componetnts  satisfying the inequalities
        \begin{equation}\label{fem-aver-conv}
           \begin{array}{l}
           \| \wt{\bf  z}(\phi) \|_{ {\bf L}_2(\Omega)}\le
          \hat{c}\|\nabla\phi\|_{{\bf L_2}(\Omega)}\,,\Q \forall \phi\in  {\mbb V}_h(\Omega)\,,\\
          \vspace*{-4mm}
          \\
          \|\nabla u + \wt{\bf  z}(u_{\mr{fem}}) \|_{{\bf L}_2(\Omega)}\le
          \hat{c}_{1}h^{l-1}\|u\|_{H^l(\Omega)}\,, \q l=1,2\,,\\
          \vspace*{-4mm}
          \\
           \|\Delta u + \nabla \cdot \wt{\bf  z}(u_{\mr{fem}}) \|_{ L_2(\Omega)}\le
          \hat{c}_{2}\|u\|_{H^2(\Omega)}\,.
         \end{array}
         \end{equation}
      \begin{lemma}  \label{lemma}
     Let $\Gamma_D=\bo \Omega$, $\psi_D \equiv 0$ and $f\in L_2(\Omega)$, the finite element assemblage
     satisfy the conditions of the generalized quasiuniformity, and
     the convergence estimates (\ref{fem-conv}) hold. Let also  $v=u_{\mr{fem}}$
        and
         for the vector-function  ${\bf z}=\wt{\bf z}(v)$, obtained by the application of the recovery technique
          to the  finite element fluxes
          $-{\bf A}\cdot \nabla v$,
          the inequalities (\ref{fem-aver-conv}) hold.  Then for $ \sigma_*^{-1}\le c_\dag h^2$ with $ c_\dag $ from (\ref{Aub-kkor})
           and any $\sigma\ge 0 $
          we have
      \begin{equation}  \label{consist}
             {\mc M}(\sigma,f,u_{\mr{fem}},\wt{\bf z})  \le Ch^{2}\|f\|_{L_2(\Omega)}
               \end{equation}
            with the constant     $C$  independent of  $\sigma$ and $h$.
         \end{lemma}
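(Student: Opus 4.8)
The plan is to reduce (\ref{consist}) to the finite element bounds (\ref{fem-conv}), (\ref{fem-aver-conv}), (\ref{2-f}), after first disposing of the two scalar prefactors in (\ref{K-23})--(\ref{K-23-1}). As in the proof of Theorem~\ref{Th: K3} it suffices to argue for ${\bf A}={\bf I}$, so that $]\!|\,\cdot\,|\![_{{\bf A}^{-1}}=\|\cdot\|_{{\bf L}_2(\Omega)}$ and ${\mc L}=-\Delta$; the general case is identical up to the constants $\mu_1,\mu_2$. First, $\Theta\le 2$ for every $\sigma\ge 0$, since $\Theta=2/(1+\kappa)\in[1,2]$ for $\sigma\le\sigma_*$ and $\Theta=1$ for $\sigma>\sigma_*$. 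Second, by the hypothesis $\sigma_*^{-1}\le c_\dag h^2$ one has $\theta={\mr{min}}(\sigma_*^{-1},\sigma^{-1})\le\sigma_*^{-1}\le c_\dag h^2$ for every $\sigma\ge 0$. Hence
\[
 {\mc M}(\sigma,f,u_{\mr{fem}},\wt{\bf z})\le 2\Big[\,\|\nabla u_{\mr{fem}}+\wt{\bf z}\|_{{\bf L}_2(\Omega)}^2+c_\dag h^2\,\|f-\sigma u_{\mr{fem}}-{\mr{div}}\,\wt{\bf z}\|_{0,\Omega}^2\,\Big]\,,
\]
and it remains to bound the flux residual by $O(h^2\|f\|_{0,\Omega}^2)$ and the equation residual by $O(\|f\|_{0,\Omega}^2)$, with constants independent of $\sigma$ and $h$.

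For the flux residual I would insert the exact gradient and use the triangle inequality together with the recovery bound (\ref{fem-aver-conv}) for $l=2$:
\[
 \|\nabla u_{\mr{fem}}+\wt{\bf z}\|_{{\bf L}_2(\Omega)}\le\|\nabla(u-u_{\mr{fem}})\|_{{\bf L}_2(\Omega)}+\|\nabla u+\wt{\bf z}(u_{\mr{fem}})\|_{{\bf L}_2(\Omega)}\le\|u-u_{\mr{fem}}\|_{1,\Omega}+\hat c_1\,h\,\|u\|_{2,\Omega}\,.
\]
By (\ref{fem-conv}) with $k=1,\,l=2$ the first summand is $\le c_{1,2}h\|u\|_{2,\Omega}$, and by the elliptic regularity estimate (\ref{2-f}), valid for all $\sigma\ge 0$, $\|u\|_{2,\Omega}\le c_\circ\|f\|_{0,\Omega}$; hence the flux residual does not exceed $(c_{1,2}+\hat c_1)^2c_\circ^2\,h^2\|f\|_{0,\Omega}^2$.

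For the equation residual I would substitute $f=-\Delta u+\sigma u$, which gives $f-\sigma u_{\mr{fem}}-{\mr{div}}\,\wt{\bf z}=-\big(\Delta u+{\mr{div}}\,\wt{\bf z}(u_{\mr{fem}})\big)+\sigma(u-u_{\mr{fem}})$, whence
\[
 \|f-\sigma u_{\mr{fem}}-{\mr{div}}\,\wt{\bf z}\|_{0,\Omega}\le\|\Delta u+{\mr{div}}\,\wt{\bf z}(u_{\mr{fem}})\|_{0,\Omega}+\sigma\,\|u-u_{\mr{fem}}\|_{0,\Omega}\,.
\]
The first term is $\le\hat c_2\|u\|_{2,\Omega}\le\hat c_2 c_\circ\|f\|_{0,\Omega}$ by the third line of (\ref{fem-aver-conv}) and (\ref{2-f}). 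The second term is the only delicate point of the whole argument: using here only the $O(h^2)$ part of (\ref{fem-conv}) would introduce the unbounded factor $\sigma h^2$, and this is precisely why the second, $\sigma$-sensitive estimate $\|u-u_{\mr{fem}}\|_{0,\Omega}\le{\mr{min}}[c_\circ c_{0,2}h^2,\sigma^{-1}]\|f\|_{0,\Omega}$ of (\ref{fem-conv}) is needed; it yields $\sigma\|u-u_{\mr{fem}}\|_{0,\Omega}\le\sigma\cdot\sigma^{-1}\|f\|_{0,\Omega}=\|f\|_{0,\Omega}$ uniformly in $\sigma$. Thus the equation residual does not exceed $(\hat c_2 c_\circ+1)^2\|f\|_{0,\Omega}^2$.

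Combining the two bounds,
\[
 {\mc M}(\sigma,f,u_{\mr{fem}},\wt{\bf z})\le 2\big[(c_{1,2}+\hat c_1)^2c_\circ^2+c_\dag(\hat c_2 c_\circ+1)^2\big]h^2\|f\|_{0,\Omega}^2=:Ch^2\|f\|_{0,\Omega}^2\,,
\]
with $C$ depending only on $\mu_1,\mu_2$, the approximation and recovery constants $c_{k,l},c_\circ,\hat c_1,\hat c_2$, and the constant $c_\dag$ of (\ref{Aub-kkor}), but not on $\sigma$ or $h$, which is (\ref{consist}). I expect the main (indeed the only genuine) obstacle to be the uniformity in $\sigma$: both the prefactors $\Theta,\theta$ and the term $\sigma\|u-u_{\mr{fem}}\|_{0,\Omega}$ must be controlled independently of $\sigma$, the latter forcing the use of the minimum of the two $L_2$-convergence estimates rather than of either one alone; everything else is a routine chain of triangle inequalities combined with the quoted approximation, recovery, and elliptic-regularity bounds.
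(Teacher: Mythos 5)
Your argument is correct and follows essentially the route the paper intends but leaves largely implicit: after bounding $\Theta\le 2$ and $\theta\le\sigma_*^{-1}\le c_\dag h^2$ (this is exactly the reduction to the form (\ref{K-23k})), you estimate the flux residual by the triangle inequality with (\ref{fem-conv}), (\ref{fem-aver-conv}) and (\ref{2-f}), and the equation residual by inserting $f={\mc L}u+\sigma u$ and using the $\min[\,\cdot\,,\sigma^{-1}]$ part of the second estimate in (\ref{fem-conv}) to absorb the factor $\sigma$ uniformly, which is precisely the purpose of that bound in the paper. The only discrepancy is that your (correct, dimensionally consistent) conclusion carries $\|f\|_{L_2(\Omega)}^2$ on the right, while (\ref{consist}) as printed shows $\|f\|_{L_2(\Omega)}$ without the square, which is evidently a misprint in the statement rather than a defect of your proof.
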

      \par  In fact, the recovered flax is defined in such a way  that at least to have the same orders of accuracy with the flax
  defined by the finite element solution or the same orders of accuracy in the unimprovable  a priori error bounds. More over
  the superconvergence recovery technique (SPR)  demonstrated ability to provide the superconvergent recovery on regular meshes
  and recovery with  much improved accuracy on general meshes. The mathematical analysis approving   this phenomena for some finite element methods can be found in \cite{Zhang:96, LiZhang:1999}. At the same time, alongside with (\ref{consist}) it is not difficult to establish the consistency of the majorant ${\mc M}(\sigma,f,v,{\bf z})$ with the a priori error bounds for finite element methods of higher order of accuracy.

  \vspace{0.2cm}
   \section{\normalsize  Concluding remarks} \label{Se: Conclud} 
   \setcounter{equation}{0}
    \vspace*{0.2cm}
    \par Theorem 2 and Lemma 1  are formulated for the first boundary value problem. If the natural boundary condition is posed on the part of the boundary, then the necessary changes of these results are illustrated by Theorem 1. Namely, vector-functions ${\bf z}$, $\wt{\bf z}(v)$  and functions $w$, $\wt{w}(v)$  should satisfy  this boundary condition.  However, in general the finite element spaces, to which these vector-functions and functions belong, do not allow  to satisfy  the boundary condition exactly. Therefore, they must be approximated in the corresponding trace spaces, and as a consequence the additional terms estimating  influence of the approximation  appear in the majorants.  The technique of the estimating   such additional terms is common for a posteriory bounds of different types and can be found, {\em e. g.}, in   \cite{AnufrievKorneevKostylev:2006}, see   Remark 4.5.

\par Results of the paper can be expanded upon more general elliptic equationns of orders $2n,\,n\!\!\ge\! 1$  and, in particular, to those described un Theorems 1.2, 1.4, 1.6 of ch. 10 in  \cite{Aubin:72}. One of them is the equation
  $ {\mc L}_n u+\sigma u=f$ with the differential operator
  $$
  {\mc L}_n u=   \sum_{|{\bf q}|,|{\bf p}|=n}(-1)^{|{\bf q}|}D^{\bf q}a_{{\bf q},{\bf p}}(x)D^{\bf p}u\,,
  $$
   where $D^{\bf q}v=\bo^{|{\bf q}|}v/\bo
x_1^{q_1}\bo x_2^{q_2}\dots \bo x_m^{q_m},\,\,
{\bf q}=(q_1,q_2,\dots,q_m)$,\,  $q_k$ are nonngative whole numbers, $|{\bf q}|=q_1+q_2+\dots +q_m $,  ${\bf A}=\{a_{{\bf p},{\bf q}}\}_{|{\bf p}|,|{\bf q}|=
  n}$ is the matrix with the sufficiently smooth coefficients, satisfying the inequalities
      $\UL{\mu }{\bf I}\le{\bf A}\le \OL{\mu} {\bf I},\,\, 0\le \UL{\mu }, \OL{\mu}={\mr{const}},\,\, \forall\, x\in \OL{\Omega}$. Here the inequality ${\bf B}\le {\bf C}$  for two nonnegative matrices ${\bf B}$ and ${\bf C}$ of the same dimension assumes that $({\bf C}-{\bf B}) $ is a nonnegative matrix.
    \par For definiteness, we turn to the case of the first boundary condition  $\bo ^k u/\bo \nu^k=0,\,\, k=0,1,\dots,(n-1),\,\, \forall\,\,x\in \bo \Omega$, where $\nu$ is the distance to the boundary along the  normal ${\bi \nu}$, and define
    the norm
    $$
         \|v\|_{\bf A}=\Big(\sum _{|{\bf q}|,|{\bf p}|=n} \int_{\Omega} a_{{\bf p},{\bf q}} (D^{\bf q}v)D^{\bf p}v dx\Big)^{1/2}\,.
    $$
       Under  the well known conditions, the value $\sigma_*= \|u-v\|_{\bf A}^2/\|u-v\|_{L_2(\Omega)}^2$ for the FEM solutions $v=u_{\mr{fem}}$
    is estimated from below as  $\sigma_*\ge 1/ ( c_{n,\dag}h^{2n}),\,\,c_{n,\dag}={\mr{const}}$.
        The  bound of the  identical to (\ref{K-23}) form retains, if for the introduced differential operator ${\mc L}_n$, matrix ${\bf A}$ and number $\sigma_*$ the norms $|\!\!\,\!|\!\!\,\!| \cdot|\!\,\!\!|\!\,\!\!|,\,\,]\!| \cdot |\![_{{\bf A}^{-1}}$ and the functions   $\Theta(\kappa),\, \theta(\kappa)$ are correspondingly defined and the vector-function $\nabla v $ and the function  ${\mr{div}} \,{\bf z}$  are replaced by ${\mc D}v=\{D^{\bf p}v \}_{|{\bf p}|=n}$ and $\,{\mc D}^*{\bf z}=\sum_{|{\bf q}|=n} (-1)^{|{\bf q}|} D^{\bf q}{ z}^{({\bf q})}$, respectively,
     where ${ z}^{({\bf q})}$  are components of the vector ${\bf z}$.

\vspace{0.2cm}
   \section{\normalsize Acknowledgemens}\label{Se:Ack}
      \vspace*{0.2cm}
    \setcounter{equation}{0}
\par  The author expresses his    sincere gratitude to professor M.R. Timerbaev and  professor M.M. Karchevsky for helpful discussions and
    improvements of the text and to doctor V. S. Kostylev  for  making-ready the text files.  Research was
supported  by the grants from the Russian Fund of Basic Research. The  author has  been partially
           supported by the Johann  Radon Institute for Computational and Applied Mathematics (RICAM)
           of the  Austrian  Academy of Sciences during his research visits at Linz.


\end{document}